\documentclass{article}
\usepackage[utf8]{inputenc}
\usepackage{amsmath}
\usepackage{amssymb}
\usepackage{amsthm}
\usepackage{xcolor}
\usepackage{fullpage}
\usepackage{hyperref}
\usepackage{tikz}
\usepackage{soul}
\usepackage{mathrsfs}
\usepackage{enumitem}
\usepackage{subcaption}
\usetikzlibrary{arrows.meta}
    
\newtheorem{theorem}{Theorem}[section]

\newtheorem{remark}[theorem]{Remark}

\newtheorem*{theorem*}{Theorem}

\title{A Unified Geometric Perspective on Zygmund’s Maximal Function Conjecture and Related Questions for the Stein Hilbert Transform}
\author{Lingxiao Zhang  \thanks{The author is partially supported by the AMS-Simons travel grant.}}

\begin{document}
\maketitle

\begin{abstract}
The Zygmund vector-field maximal function conjecture is a long-standing open problem in harmonic analysis. It is notoriously difficult in part because the vector-field maximal function is non-translation-invariant and lies beyond the scope of finite-type theory. Its resolution would yield a Lebesgue differentiation theorem along vector fields, i.e., pointwise recovery of functions from averages along vector fields. We establish an \(L^2\) boundedness criterion that substantially enlarges the class of admissible geometric degeneracies. More precisely, the power-type decay condition in Bourgain’s theorem can be replaced by the much weaker logarithmic-polynomial decay \((\log\tfrac{1}{\tau})^{-p}\), \(p>2\). This condition no longer yields the power-type integrability underlying Bourgain’s argument, yet \(L^2\) boundedness persists, permitting degeneracies far beyond any power-law regime. Our result also provides a unified geometric perspective on finite-type, intermediate-degenerate, and highly non-finite-type vector fields, with their behavior organized through the angular-variation quantity \(w_x(t)\). This perspective places several classical boundedness criteria within a common hierarchy. Motivated by Lacey and Li’s work on the Stein vector-field Hilbert transform conjecture, we further introduce a non-centered rectangular maximal operator adapted to the variable geometry of vector fields and prove its weak-type \((1,1)\) boundedness. Together, these results provide new tools and a unified geometric framework for the study of two fundamental conjectural problems in non-translation-invariant analysis.
\end{abstract}

\section{Introduction}

The study of maximal and singular integral operators associated with lower-dimensional manifolds is a central theme in modern harmonic analysis, with connections to complex analysis, partial differential equations, and ergodic theory. In particular, Zygmund's conjecture addresses maximal functions associated with vector fields. 

The conjecture is notoriously difficult in part because the vector-field maximal function is non-translation-invariant and lies beyond the scope of classical finite-type theories. Its resolution would yield a Lebesgue differentiation theorem along vector fields---that is, the pointwise recovery of a function from its averages taken along the vector field. Classical differentiation theory is fundamentally tied to averaging over translation-invariant families, whereas vector-field averages introduce a spatially varying geometry which can be highly degenerate.

This paper focuses on advancing Zygmund's conjecture. 
Let $\Omega\subset \mathbb{R}^2$ be an open bounded set, and let $v$ be a $C^1$ vector field defined on a neighborhood of $\overline{\Omega}$. For a sufficiently small parameter $\epsilon_0>0$, we study
$$
M_vf(x) = \sup_{\epsilon< \epsilon_0} \left|A_\epsilon f(x)\right| = \sup_{\epsilon<\epsilon_0} \left|\frac{1}{\epsilon}\int_{-\epsilon}^\epsilon f(x+tv(x))\,dt \right| \cdot \chi_\Omega(x),
$$
where $\chi_\Omega$ denotes the characteristic function supported on $\Omega$.  
We establish a new boundedness criterion for $M_v$, relaxing the \emph{power-type decay} of Bourgain to a \emph{logarithmic polynomial decay}. This new condition no longer yields the power-type integrability underlying Bourgain's argument, yet $L^2$ boundedness persists, permitting degeneracies far beyond any power-law regime.

Since $\overline{\Omega}$ is compact and $v$ is $C^1$ on a neighborhood of $\overline{\Omega}$, we may fix a constant $B > 0$ such that $\|v\|_{C^1} \le B$ on this neighborhood. Throughout this paper, we choose the parameter $\epsilon_0 > 0$ sufficiently small depending only on $\Omega$ and $B$. This choice ensures both that the line segments $\left\{x + tv(x) : |t| \le \epsilon_0\right\}$ remain within the domain of $v$ for all $x \in \Omega$, and that $\epsilon_0$ satisfies the necessary smallness conditions.

For $x \in \Omega$ and $t \in [-\epsilon_0, \epsilon_0]$, we define the function
\begin{equation}
w_x(t) = \left|\det \left[ v(x+tv(x)), v(x)\right]\right|.
\end{equation}
Note that $w_x(t)$ equals the area of the parallelogram spanned by the vectors $v(x+tv(x))$ and $v(x)$. Informally, this quantity encodes both the relative angular deviation and the magnitude information of $v$ along the line segment $t\mapsto x+tv(x)$ emanating from $x$.

The following is Bourgain's theorem:
\begin{theorem}[\cite{MR1009171}]
If $v$ is $C^1$ and satisfies 
\begin{equation}\label{bourgain}
\bigg|\bigg\{ t\in [-\epsilon, \epsilon]: w_x(t) <\tau \sup_{[-\epsilon, \epsilon]} w_x(t)\bigg\}\bigg| \leq C\tau^{c_0}\epsilon, \quad \forall 0<\tau<1, \forall 0<\epsilon<\epsilon_0, \forall x\in \Omega,
\end{equation}
for some constants $0<c_0, C<\infty$. Then $M_v$ is bounded on $L^2$.
\end{theorem}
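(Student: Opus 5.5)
We would prove Bourgain's theorem by a Littlewood--Paley decomposition in $f$ combined with a square-function (or $TT^*$) estimate for each frequency piece, where the non-degeneracy condition \eqref{bourgain} supplies the decay.

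\textbf{Reduction to frequency pieces.} Write $f=\sum_{k\ge 0}\Delta_k f$, with $\Delta_k$ localizing $|\xi|\sim 2^k$ (and $\Delta_0$ the low frequencies). By standard comparison with the Hardy--Littlewood maximal function we may replace $A_\epsilon$ by a smooth average $\tilde A_\epsilon f(x)=\int f(x+tv(x))\phi_\epsilon(t)\,dt$. Dyadically, $\sup_{\epsilon\sim 2^{-j}}|\tilde A_\epsilon \Delta_k f|$ is controlled by the Hardy--Littlewood maximal function along directions when $k\le j$. This part is trivially bounded: for fixed $x$ it is a one-dimensional average of $f$ along a line, and at frequency scale $\le\epsilon^{-1}$ it is dominated by the strong maximal function.

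The main term is $k=j+s$ with $s\ge 0$. Set
\[
T_{j,s}f(x)=\sup_{\epsilon\sim 2^{-j}}\bigl|\tilde A_\epsilon \Delta_{j+s}f(x)\bigr|.
\]
The goal is the single-scale estimate
\[
\|T_{j,s}f\|_2\lesssim 2^{-\delta s}\|f\|_2 .
\]
Once this is known, we sum over $s$ and combine over $j$ using Littlewood--Paley orthogonality: $\sup_j$ is bounded by the $\ell^2$ sum over $j$ of $T_{j,s}$ applied to $\Delta_{j+s}f$. This gives $\|M_v f\|_2\lesssim\sum_s 2^{-\delta s}\|f\|_2$.

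\textbf{Single-scale estimate.} Sobolev embedding in $\epsilon$ over a dyadic interval reduces the supremum to estimating $\tilde A_\epsilon\Delta_{j+s}$ and $\epsilon\partial_\epsilon\tilde A_\epsilon\Delta_{j+s}$ for fixed $\epsilon$, losing at most a factor $2^{s/2}$, which must be beaten. Localize $x$ to squares $Q$ of side $\sim 2^{-j}$. On $Q$ the field is nearly constant up to $O(2^{-j})$, by the $C^1$ bound $B$ and smallness of $\epsilon_0$. We then study $TT^*$, whose kernel is
\[
K(x,y)=\iint \phi_\epsilon(t)\phi_\epsilon(t')\,\psi_{j+s}\bigl(x+tv(x)-y-t'v(y)\bigr)\,dt\,dt'.
\]
The decay of this kernel comes from the transversality of the lines through $x$ and $y$: for $y=x+tv(x)$, the angle between $v(y)$ and $v(x)$, times $|v|$, is exactly $w_x(t)$.

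The key step is an $L^2$ bound by $2^{-\delta s}$ for the operator restricted to $Q$, which we would obtain as follows.
\begin{enumerate}
\item Decompose $t\in[-\epsilon,\epsilon]$ according to the size of $w_x(t)$ relative to $\sup_{[-\epsilon,\epsilon]}w_x$. This supremum is comparable to the total curvature of the field across $Q$ in the direction $v(x)$.
\item Where $w_x(t)\ge 2^{-s\eta}\sup w_x$, the lines are transversal at an angle exceeding $2^{-s}$ relative to the natural scale. Integration by parts, or a Kakeya/Córdoba-type $L^2$ estimate for averages over segments with separated directions, then gives decay $2^{-cs}$.
\item The exceptional set where $w_x(t)<2^{-s\eta}\sup w_x$ has measure at most $C2^{-s\eta c_0}\epsilon$ by \eqref{bourgain}. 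The average over it is therefore bounded by $2^{-s\eta c_0}$ times a Hardy--Littlewood maximal function.
\item One further subtlety: if $\sup w_x$ itself is tiny compared to $2^{-s}$, the field is essentially constant on $Q$ at this frequency. In that case we use the translation-invariant (fixed direction) square function bound instead, which gives the needed orthogonality directly.
\end{enumerate}
Choosing $\eta$ small balances the Sobolev loss against the gain.

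\textbf{Main obstacle.} The hardest step is the transversal part: turning the lower bound on $w_x(t)$ into a genuine $L^2$ gain that is uniform in $x$ after localization to $Q$. This requires controlling the dependence of $v(y)$ on both $t$ and the perpendicular displacement. We would first try a change of variables $(x,t)\mapsto x+tv(x)$, whose Jacobian is essentially $w_x(t)/|v|$ up to a factor linear in $t$. We would then apply a Schur test to the $TT^*$ kernel, using the sublevel set estimate \eqref{bourgain} repeatedly at all dyadic levels of $\tau$. This is exactly where the power-type decay $\tau^{c_0}$ makes the dyadic sum over levels converge.
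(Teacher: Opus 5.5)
Your local tool, a $TT^*$/Schur bound in which \eqref{bourgain} controls the set where $w_x$ is small, is the right one. But the scheme built around it rests on the single-scale bound $\|T_{j,s}f\|_2\lesssim 2^{-\delta s}\|f\|_2$, where $2^s\sim T\epsilon$ measures frequency against the \emph{length} of the segment, and that bound is false. The constant field $v\equiv e_1$ satisfies \eqref{bourgain} vacuously. For it, $\tilde A_\epsilon$ has multiplier $\hat\phi(\epsilon\xi_1)$, which equals $1$ for $\xi\perp e_1$, so there is no decay in $s$ at all. More generally, no decay is available at points where $T\delta\lesssim 1$, with $\delta=\delta(R_{x,\epsilon})=\epsilon\sup_{[-\epsilon,\epsilon]}w_x/|v(x)|\lesssim\epsilon^2$ the width of Bourgain's rectangle. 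What \eqref{bourgain} actually yields, through $\int_{-\epsilon}^{\epsilon}(1+\epsilon T|v(x_0)|^{-1}w_x(t))^{-2}\,dt\le C\tau^{c_0}\epsilon+C\epsilon(\tau T\delta)^{-2}$ with a single optimized $\tau$, is decay $(T\delta)^{-c}$ in frequency times the \emph{width}. Since $\delta$ varies with $x$ and $\epsilon$, the argument recounted in Section~\ref{refinement} proceeds as follows:
sort points into $\Omega_{\epsilon,s}=\{\delta(R_{x,\epsilon})\sim 2^{-s}\}$;
cover these by rectangles of length $\sim\epsilon$ and width $\sim 2^{-s}$, not squares of side $\epsilon$;
pair $\Omega_{\epsilon,s}$ with $f_{2^{s+j}}$;
sum the gains $2^{-cj}$ over $j=\log_2(T\delta)>0$.
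The rate in \eqref{bourgain} matters only for this last sum. It is not needed to sum over dyadic $\tau$-levels: disjointness of the level sets handles that, and the sublevel bound is used at a single level. This is exactly what lets Theorem~\ref{3} weaken the rate. Also, the Sobolev loss you fear is absent: $\epsilon\partial_\epsilon\tilde A_\epsilon$ is an average of the same type, with bump $-(u\phi(u))'$.

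Consequently your plan has no mechanism for the frequencies $1/\epsilon\lesssim T\lesssim 1/\delta(R_{x,\epsilon})$, which is where the variable geometry lives. In that range, $A_\epsilon$ applied to $\sum_{T\lesssim 1/\delta}f_T$ is controlled by averages of $|f|$ over $R_{x,\epsilon}$. So you need an $L^2$ bound for $\sup_{\epsilon}|R_{x,\epsilon}|^{-1}\int_{R_{x,\epsilon}}|f|$. This is a maximal operator over rectangles of variable direction and eccentricity, bounded only because these rectangles are adapted to $v$. In the paper's account this comes from the parts of \cite{MR1009171} imported unchanged: the doubling property \eqref{doubling}, Lemma~3.21 and Section~4.

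Your item~4 tries to dispose of this regime by asserting that small $\sup w_x$ makes $v$ essentially constant on $Q$. But $w_x$ only measures the variation of $v$ along the segment through $x$. Take $v(x_1,x_2)=(1,x_2)$, which satisfies \eqref{bourgain} with $c_0=1$. Then $w_x(t)=|t\,x_2|$, so $w_x\equiv 0$ on $\{x_2=0\}$. Yet $v$ rotates by $\sim\epsilon$ across every square of side $\epsilon$ meeting that line, displacing the segments by $\sim\epsilon^2$, which is not negligible once $T\epsilon^2\gg 1$. Moreover, the degenerate set depends on $\epsilon$ and $s$, so a fixed-direction square function on a single square cannot control the supremum over $\epsilon$. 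That is precisely the job of the $x$-dependent decomposition into the sets $\Omega_{\epsilon,s}$, combined with the maximal function over the $R_{x,\epsilon}$.
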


The following is the main result, which requires only a logarithmic polynomial decay:

\begin{theorem}\label{3}
If $v$ is $C^1$ and satisfies 
\begin{equation}\label{zhang}
\bigg|\bigg\{ t\in [-\epsilon, \epsilon]: w_x(t) <\tau \sup_{[-\epsilon, \epsilon]} w_x(t)\bigg\}\bigg| \leq C \left(\log \frac{1}{\tau}\right)^{-p}\epsilon,  \quad \forall 0<\tau<1, \forall 0<\epsilon<\epsilon_0, \forall x\in \Omega,
\end{equation}
for some constants $0<C<\infty$ and $2<p<\infty$. Then $M_v$ is bounded on $L^2$.
\end{theorem}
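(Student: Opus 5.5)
The plan is to follow the architecture of Bourgain's $L^2$ argument for Theorem~(Bourgain) and to isolate the single place where power-type decay \eqref{bourgain} enters, replacing it with a summation that converges under \eqref{zhang} precisely because $p>2$. First, I reduce to dyadic scales, $\epsilon = 2^{-k}\epsilon_0$. Since $f$ may be taken nonnegative, $M_v f \lesssim \sup_k A_{2^{-k}\epsilon_0} f$. I then compare each average $A_{2^{-k}}$ with a smoothed version $P_k f$, obtained by averaging $f$ over a $2^{-k}\times\delta_k(x)$ parallelogram adapted to $v$. Here the transversal width $\delta_k(x)$ is chosen in terms of $W_k(x):=\sup_{|t|\le 2^{-k}} w_x(t)$, roughly $\delta_k(x)\sim 2^{-k}W_k(x)$, which is the natural spread of the integral curves. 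The smoothed part $\sup_k |P_k f|$ is controlled by a Kakeya/Nikodym-type maximal function over rectangles with $C^1$-controlled orientations. Its $L^2$ bound is standard: the orientations at scale $2^{-k}$ vary Lipschitz-ly in $x$, so the relevant rectangle families have bounded overlap. The real work is the square function $\big(\sum_k |A_{2^{-k}}f-P_kf|^2\big)^{1/2}$, or more precisely a Littlewood–Paley decomposition $f=\sum_j \Delta_j f$ with the error terms $(A_{2^{-k}}-P_k)\Delta_{k+s}f$ indexed by $s\ge 0$.

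Second, for fixed $s$ I estimate $\|\sup_k|(A_{2^{-k}}-P_k)\Delta_{k+s}f|\|_2$ by a $TT^*$ argument. The kernel of $A A^*$ at scale $2^{-k}$ involves the change of variables $(t,t')\mapsto x+tv(x)-t'v(y)$, whose Jacobian is comparable to $w_x(t)$. Frequency $2^{k+s}$ oscillation gains only on the set where $w_x(t)\gtrsim \tau W_k(x)$. Splitting $[-2^{-k},2^{-k}]$ into the good set, where $w_x(t)\ge\tau W_k(x)$, and the bad set, where $w_x(t)<\tau W_k(x)$, the good set gives a bound of order $(\tau^{-1}2^{-s})^{1/2}$ by van der Corput/stationary phase. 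On the bad set I only use trivial size bounds, of order $(|\text{bad}|/2^{-k})^{1/2}\le C(\log\frac1\tau)^{-p/2}$ by \eqref{zhang}. Choosing $\tau=2^{-s/2}$ gives a piece bound $\lesssim 2^{-s/4}+ s^{-p/2}$, uniformly in $k$. To pass from single $k$ to the supremum over $k$, I use the almost-orthogonality of the $\Delta_{k+s}$ together with a Sobolev-in-scale (Rademacher–Menshov or $\sqrt{\log}$-loss-free square function) argument in $k$. This gives $\|\sup_k|\cdots|\|_2\lesssim (2^{-s/4}+s^{-p/2})\|f\|_2$ up to at most a logarithmic factor.

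Third, I sum in $s$: $\sum_s s^{-p/2}<\infty$ iff $p>2$, which is exactly the hypothesis. This is where the logarithmic condition replaces Bourgain's geometric series $\sum 2^{-c_0 s}$.

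The main obstacle is the bad-set term. The trivial bound must not lose any power of $s$ when taking the supremum over $k$ and in the local-to-global patching, since there is no room: any extra factor $s^{1/2}$, for instance from a Rademacher–Menshov bound, would require $p>3$. I therefore plan to treat the bad-set contribution not by $TT^*$ but by dominating it pointwise by $\sup_k \frac{1}{2^{-k}}\int_{\mathrm{bad}_k(x)}|\Delta_{k+s}f|(x+tv(x))\,dt$. Each $\Delta_{k+s}f$ for distinct $k$ is then controlled via $\sum_k$ of squares rather than the sup, giving the clean $s^{-p/2}$. I must also verify that $W_k(x)$ is comparable on the $2^{-k}$-parallelogram, using the $C^1$ bound $B$, so that the decomposition is well defined.
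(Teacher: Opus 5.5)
\textbf{There is a genuine gap in your bookkeeping.} Your high-level plan matches the paper's. That plan is: split $[-\epsilon,\epsilon]$ into good and bad sets, let the bad set cost $(\log\frac1\tau)^{-p/2}$ via \eqref{zhang}, take $\tau$ a power of the gain parameter, control the supremum over scales at a fixed gain level by a square function, and close with $\sum_j j^{-p/2}<\infty$ iff $p>2$. The problem is that the oscillatory gain is governed by frequency times the \emph{width} $\delta_k(x)=2^{-k}W_k(x)$, not frequency times the length.

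For $\Delta_{k+s}f$, the good set gives a factor $(\tau\,2^{k+s}\delta_k(x))^{-1}=(\tau\,2^{s}W_k(x))^{-1}$, not $(\tau^{-1}2^{-s})^{1/2}$. Since $w_x(t)\lesssim|t|$, one always has $W_k(x)\lesssim 2^{-k}$, and $W_k(x)$ varies with $x$. Take $v(x_1,x_2)=(1,x_1)$, where $w_x(t)=|t|$. The gain parameter is then $2^{s-k}$, so $(A_{2^{-k}}-P_k)\Delta_{k+s}$ has size $\sim 1$ when $k\approx s$. Your bound $2^{-s/4}+s^{-p/2}$ uniformly in $k$ is therefore false, and $\sum_s\sup_k$ diverges.

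The correct index is $j=\log_2(T\delta_k(x))$, which depends on $x$. So no fixed Littlewood--Paley piece can be attached to a scale. This is why the paper, following Bourgain, proceeds as follows:
\begin{itemize}
\item it localizes to $\Omega_{\epsilon,s}=\{x:\delta(R_{x,\epsilon})\sim 2^{-s}\}$ and covers it by $\epsilon\times 2^{-s}$ rectangles;
\item it proves $\|A_\epsilon(f_{2^{s+j}})\chi_R\|_2\lesssim j^{-p/2}\|f_{2^{s+j}}(\chi_{R'}\ast\psi_{2^{-s-j}})\|_2$;
\item it bounds $\max_\epsilon$ by the $\ell^2$ sum over $(\epsilon,s)$;
\item it then needs $\sum_{\epsilon,s}\|f_{2^{s+j}}(\chi_{\Omega'_{\epsilon,s}}\ast\psi_{2^{-s-j}})\|_2^2\lesssim\sum_s\|f_{2^{s+j}}\|_2^2$.
\end{itemize}
The last step rests on bounded overlap in $\epsilon$, for fixed $s$, of the enlarged sets $\Omega'_{\epsilon,s}$. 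For fixed $x$, $\delta(R_{x,\epsilon})$ at least doubles with $\epsilon$, and comparability of widths across rectangles passes this to the enlargements. Your orthogonality mechanism (distinct $k$ use distinct $\Delta_{k+s}$) does not survive the re-indexing; this spatial overlap argument replaces it. Once it is in place, your worry about Rademacher--Menshov losses is moot: the supremum costs nothing, and the triangle inequality in $j$ closes.

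\textbf{You also never derive the doubling property.} The property $\sup_{[-2\epsilon,2\epsilon]}w_x\le C\sup_{[-\epsilon,\epsilon]}w_x$ is the first modification the paper makes, and Bourgain's geometric lemmas rest on it.

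The comparability of $W_k$ across the $2^{-k}$-parallelogram does not follow from the $C^1$ bound alone, contrary to your plan. For $y=x+rv(x)$ with $|r|\le 2^{-k}$, $W_k(y)$ is governed by $w_x$ on $[r-2^{-k},r+2^{-k}]\subset[-2^{1-k},2^{1-k}]$. Without doubling, the supremum there can dwarf $W_k(x)$.

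Doubling is also needed for the bounded overlap above and for the $L^2$ bound on $\sup_k|P_kf|$. That bound is not ``standard'' from Lipschitz variation of orientation: the widths $2^{-k}W_k(x)$ can lie far below the scale $2^{-2k}$ at which that argument suffices. What makes a Vitali-type covering work is that $v$ stays within angle $\lesssim W_k(x)$ of $v(x)$ on the rectangle, together with doubling.

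Under \eqref{zhang}, doubling is immediate. Pick $\tau_0$ with $C(\log\frac{1}{\tau_0})^{-p}<1$. Then the bad set in $[-2\epsilon,2\epsilon]$ has measure $<2\epsilon$, so $[-\epsilon,\epsilon]$ contains a point where $w_x\ge\tau_0\sup_{[-2\epsilon,2\epsilon]}w_x$.
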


\begin{remark}
By a straightforward modification of the proof, the decay rate $\left(\log \frac{1}{\tau}\right)^{-p}$ in \eqref{zhang} can be replaced by iterated logarithmic variants, such as $\left(\log \frac{1}{\tau}\right) \left(\log \log \frac{1}{\tau}\right)^{-p}$ or $\left(\log \frac{1}{\tau}\right) \left(\log \log \frac{1}{\tau}\right) \left(\log \log \log \frac{1}{\tau}\right)^{-p}$. We do not pursue these routine refinements here.
\end{remark}

Unlike traditional frameworks that treat finite-type and non-finite-type operators separately, Section \ref{1.1} presents a unified geometric perspective on finite-type, intermediate-degenerate, and highly non-finite-type vector fields, situating several pivotal conditions from earlier foundational works within a single hierarchy. 
In Section \ref{refinement}, we prove Theorem \ref{3}. In Section \ref{sec3}, motivated by Lacey and Li's work on the Stein vector-field Hilbert transform conjecture, we introduce a non-centered rectangular maximal operator adapted to the variable geometry of vector fields and prove its weak-type $(1,1)$ boundedness. Although Theorem \ref{3} and Section \ref{sec3} study different operators, they are both driven by the same underlying vector-field geometry.

\subsection{A unified geometric view for boundedness criteria}\label{1.1}

The foundational insight of Stein and Wainger in the late 1970s explicitly introduced several central $L^p$ boundedness problems concerning the role of curvature. In this direction, Christ, Nagel, Stein, and Wainger \cite{CNSW} showed that $L^p$ boundedness of singular and maximal Radon transforms follows from a finite type condition: H\"ormander's condition. In particular, H\"ormander's condition is shown to be equivalent to a Jacobian nondegeneracy condition, which in turn implies the power-type integrability condition
\begin{equation}\label{J}
\int_B |J(\tau)|^{-\sigma}\,d\tau <\infty
\end{equation}
for sufficiently small $\sigma>0$, where $J(\tau)$ denotes an associated Jacobian.

Recall in Bourgain \cite{MR1009171}, the verification that any analytic $v$ satisfies condition \eqref{bourgain} is carried out via an equivalent condition: there exists $\sigma>0$ such that
\begin{equation}\label{J'}
\frac{1}{\epsilon}\int_{-\epsilon}^\epsilon \left(\frac{w_x(t)}{\sup_{[-\epsilon, \epsilon]}w_x(t)}\right)^{-\sigma}\,dt\lesssim 1, 
\end{equation}
uniformly in $x$ and $\epsilon$; see (2.3) in \cite{MR1009171}. This power-type integrability condition is of a similar form to \eqref{J}. The main difference is that no H\"ormander condition is assumed, so uniformity in $\epsilon$ is imposed explicitly across scales, rather than following automatically.

Finite type conditions, which involve curvature, have been extended to various contexts, including discrete operators, multi-parameter settings, and non-translation invariant cases. In the real analytic setting---where a certain finite-type condition holds automatically---Stein, Street, and Zhang \cite{ANALYTIC, ME} obtained a full characterization of bounded multi-parameter singular Radon transforms. In the non-analytic setting, such a finite-type condition does not automatically hold, yet it has long been understood that boundedness does not necessarily require finite-type conditions. For example, the operator
$$
f\mapsto \int_{-1}^1 f\left(x+t, y+e^{-\frac{1}{|t|}}\,\text{sgn}\,t\right)\,\frac{dt}{t}
$$
remains $L^p$-bounded despite the failure of finite type conditions (the curvature of the underlying curve at the origin vanishes); see Carbery, Christ, Vance, Wainger, and Watson \cite{CARBERYCHRIST}. 
For a result in the non-translation-invariant non-finite-type setting, see Zhang \cite{paper3}, which is of the same strength and form as \cite{CARBERYCHRIST}.
The approach in \cite{paper3} naturally degenerates to that of \cite{CARBERYCHRIST} when restricting to translation-invariant cases. Moreover, an integrability condition of the form \eqref{J'} crystallizes within its proof, as appears in Proposition 4.12 of \cite{paper3}.
Therefore power-type integrability conditions of the form \eqref{J'} also govern some non-finite-type settings.

In contrast, the present paper employs a weaker condition 
\eqref{zhang}, under which the integrability condition \eqref{J'} no longer holds. For the reader's convenience, we record here integrability conditions corresponding to different conditions in the literature to facilitate direct comparison. As mentioned above, Bourgain's condition \eqref{bourgain} is equivalent to the power-type integrability condition \eqref{J'}. For the Stein vector-field Hilbert transform conjecture, Lacey and Li (Remark 3.35 in \cite{MR2654385}) prove the $L^2$ boundedness of 
$$
H_vf(x) = \int_{-\epsilon_0}^{\epsilon_0} f(x+tv(x))\,\frac{dt}{t}
$$
under the condition
\begin{equation}\label{li} 
\bigg|\bigg\{ t\in [-\epsilon, \epsilon]: w_x(t) <\tau \sup_{[-\epsilon, \epsilon]} w_x(t)\bigg\}\bigg| \leq C e^{-\sigma(\log \frac{1}{\tau})^{c_1}} \epsilon,  \quad \forall 0<\tau<1, \forall 0<\epsilon<\epsilon_0, \forall x\in \Omega,
\end{equation}
for some constants $0<c_1, \sigma, C<\infty$. (This is of interest when $c_1<1$.)
This exponential-logarithmic decay remains the sharpest regularity condition known for Stein's conjecture.
Condition \eqref{li} is equivalent to the exponential-logarithmic integrability condition
\begin{equation}\label{2'}
\frac{1}{\epsilon}\int_{-\epsilon}^\epsilon \exp\!\left(\sigma\left(-\log \frac{w_x(t)}{\sup_{[-\epsilon, \epsilon]}w_x(t)}\right)^{c_1}\right) \,dt\lesssim 1,
\end{equation}
uniformly in $x$ and $\epsilon$, for some $\sigma>0$. Our condition \eqref{zhang} is equivalent to the logarithmic polynomial integrability condition  
\begin{equation}\label{3'}
\frac{1}{\epsilon}\int_{-\epsilon}^\epsilon \left(-\log \frac{w_x(t)}{\sup_{[-\epsilon, \epsilon]}w_x(t)}\right)^q \,dt \lesssim 1,
\end{equation}
uniformly in $x$ and $\epsilon$, for some $2<q<\infty$. In light of the relationships among \eqref{J'}, \eqref{2'}, and \eqref{3'}, the conditions introduced in this paper may be viewed as a natural continuation of a long-standing line of inquiry in the subject.

\begin{figure}[htbp]
\centering
\begin{tikzpicture}[xscale=0.9, yscale=0.6]
\draw[, fill=gray!10] (3.6, 0)--(-1.8, -1.8)--(-3.6,0)--(1.8,1.8)--cycle;
\draw[, fill=gray!25]  (1.6,0)--(-1.2,-1.2) -- (-1.6,0) --(1.2,1.2)-- cycle;
\draw[, fill=gray!45]    (0.4,0) -- (-0.6,-0.6) -- (-0.4,0) -- (0.6,0.6) -- cycle;
\draw [densely dotted] (-4.8,0) -- (4.8,0);
\draw [densely dotted] (-2.4,-2.4) -- (2.4,2.4);
\end{tikzpicture}
\caption{Metric geometries are generally defined by nested ``balls''. This figure displays a non-isotropic but homogeneous metric geometry.}
\label{fig:1}
\end{figure}

The hierarchy of decay conditions established above provides a glimpse into the underlying geometric structure, where $w_x(t)$ quantifies the angular variation of $v$. Crucially, \emph{less rigidity} is now imposed on this angular variation. For instance, the modulus of $w_x(t)$ can fluctuate between being of order $\sim 1$ and vanishing as rapidly as $e^{-1/|t|^\gamma}$ for $0 < \gamma < 1$.

\begin{figure}[htbp]
\centering
\begin{tikzpicture}[xscale=1, yscale=1.1]
\draw [fill=gray!10] 
(-2,3)  -- (-4, 1)  -- (-6, -1) --(-6,-3) --(-4,-1) -- (-2, 1) --cycle;
\draw[fill=gray!25] (-2.7, 0.422) -- (-2.7, 1.422) -- (-4,0.5) -- (-5.3, -0.422) -- (-5.3, -1.422) -- (-4, -0.5) -- (-2.7, 0.422); 
\draw[fill=gray!45] (-3.3, 0.122) -- (-3.3, 0.622)  --(-4, 0.25) -- (-4.7, -0.122) --(-4.7, -0.622) -- cycle; 
\draw [densely dotted] (-4,-2.7)  -- (-4,2.7);
\draw [densely dotted] (-7,0) -- (-1,0);
\draw plot [smooth] coordinates {
(-0.5, 2.107) 
    (-0.75, 1.967) (-1.0, 1.81) 
    (-1.275, 1.615) (-1.55, 1.4) 
    (-1.775, 1.203) (-2,1) 
    (-2.2, 0.812) (-2.4, 0.64) 
    (-2.55, 0.524) (-2.7, 0.422) 
    (-2.85, 0.330) (-3, 0.25) 
    (-3.15, 0.180) (-3.3, 0.122) 
    (-3.5, 0.062) (-3.7, 0.022) 
    (-3.85, 0.005) (-4,0) (-4.15, -0.005) 
    (-4.3, -0.022) (-4.5, -0.062) (-4.7, -0.122) 
    (-4.85, -0.180) (-5, -0.25) 
    (-5.15, -0.330) (-5.3, -0.422) 
    (-5.45, -0.524) (-5.6, -0.64) 
    (-5.8, -0.812) (-6,-1) 
    (-6.225, -1.203) (-6.45, -1.4) 
       (-6.725, -1.615)
};
\draw (-1, 1.6) node[overlay, anchor=west]{$\Gamma(t)$};
\end{tikzpicture}
\caption{This figure displays the non-homogeneous metric geometry required to counteract the singularity near the origin of the operator associated with a representative curve $\Gamma(t)=\left(t, e^{-\frac{1}{|t|}}\,\text{sgn}\,t\right)$.}
\label{fig:2}
\end{figure}

To delve deeper into the underlying geometry, it is instructive to see its direct influence on operator boundedness and examine how it functions within the proof. For simplicity, consider the $\mathbb{R}^2$ case. With the Lebesgue measure, the role of a suitable metric geometry in securing boundedness is fundamental: in finite-type settings, a standard homogeneous---though not necessarily isotropic---metric suffices (Figure \ref{fig:1}). In non-finite-type settings, however, such as those in \cite{CARBERYCHRIST, paper3}, a dynamic ``rotation'' behavior is required to counteract the highly singular nature of the operator (Figure \ref{fig:2}). Both \cite{MR1009171} and the present paper implicitly incorporate this rotational behavior; specifically, $\sup_{[-\epsilon, \epsilon]} w_x(t)$ is derived from the smallest  rectangle containing all the parallelograms spanned by the fixed vector $\epsilon v(x)$ and the 
rotating vector $\epsilon v(x+t v(x))$ with $t$ varying in $[-\epsilon, \epsilon]$ (Figure \ref{fig:3}). 

\begin{figure}[htbp]
\centering
\begin{tikzpicture}[xscale=0.55, yscale=1]
    \draw[fill=gray!25] (-5,-0.75) rectangle (5,0.75);
    
    \draw[-{Stealth[scale=1.2]}] (0,0) -- (5,0) node[overlay, anchor=west]{\small $\epsilon v(x)$};

    \draw [densely dotted] (-5,0) -- (1.25,0.75) -- (5,0) -- (-1.25,-0.75) -- cycle;
    
    \draw[-{Stealth[scale=1.2]}] (0,0) -- (1.25,0.75);
    \draw (2.3, 0.73) node[anchor=south]{\small $\epsilon v(x+tv(x))$};
    
    \draw (0,0) node[anchor=north]{$x$};
\end{tikzpicture}
\caption{This figure displays Bourgain's rectangle $R_{x,\epsilon}$ of center $x$, length $2\epsilon|v(x)|$, and width $2 \epsilon \sup_{[-\epsilon, \epsilon]} w_x(t)/|v(x)|$.}
\label{fig:3}
\end{figure}

Such underlying geometry is typically uncovered through distinct technical frameworks: the duality argument combined with Schur's test (\cite{MR1009171}), the $TT^\ast$ method (\cite{CNSW,ANALYTIC,paper3}), and time-frequency analysis (\cite{MR2654385}).

\section{Proof of the main result}\label{refinement}

In this section, we modify Bourgain’s arguments (\cite{MR1009171}) to prove Theorem \ref{3},. We retain his unified technical framework for maximal functions along vector fields while substantially weakening the decay condition. This extends the theory to encompass finite-type, intermediate-degenerate, and highly non-finite-type behaviors within a single framework, removing long-standing geometric limitations inherent in other approaches.

\begin{proof}[Proof of Theorem \ref{3}]
We need to prove the $L^2$ boundedness of $M_v$ under the logarithmic polynomial decay condition \eqref{zhang}. For comparison, we also prove simultaneously the boundedness of $M_v$ under exponential-logarithmic decay condition \eqref{li}.

Recall Bourgain \cite{MR1009171} used the power-type decay condition \eqref{bourgain}. The first instance in which condition \eqref{bourgain} is used in \cite{MR1009171} appears in equation (3.20), where it is shown that there exists $0<C<\infty$,
\begin{equation}\label{doubling}
\sup_{[-2\epsilon, 2\epsilon]} w_x(t) \leq C \sup_{[-\epsilon, \epsilon]} w_x(t), \quad \forall x\in \Omega, \forall 0<\epsilon<\epsilon_0.
\end{equation}
If we assume instead condition \eqref{zhang}, then since there exists $0<\tau_0<1$ (independent of $x$ and $\epsilon$) such that 
$$
C \left(\log \frac{1}{\tau_0}\right)^{-p} <1,
$$
we have
\begin{align*}
\left|\left\{t\in [-2\epsilon, 2\epsilon]: w_x(t) < \tau_0 \sup_{[-2\epsilon, 2\epsilon]} w_x(t)\right\}\right| < 2\epsilon, \quad \forall x\in \Omega, \forall 0<\epsilon<\epsilon_0.
\end{align*}
Hence 
\begin{align*}
\sup_{[-\epsilon, \epsilon]} w_x(t) \geq \tau_0 \sup_{[-2\epsilon, 2\epsilon]} w_x(t).
\end{align*}
Therefore \eqref{doubling} is obtained.

If we assume instead condition \eqref{li}, \eqref{doubling} follows similarly.

Lemma 3.21 of \cite{MR1009171} and its arguments remain the same under conditions \eqref{li} and \eqref{zhang}, respectively. 

The second place that requires modification is the last line of equation (3.27) in \cite{MR1009171}:
\begin{align*}
&\quad \int_{-\epsilon}^\epsilon \left( 1+\epsilon T |v(x_0)|^{-1}w_x(t)\right)^{-2}\,dt\\
&\leq \left|\left\{ t\in [-\epsilon, \epsilon]: w_x(t)<\tau \sup_{[-\epsilon, \epsilon]} w_x(t)\right\}\right| + \epsilon \left( \epsilon T |v(x_0)|^{-1} \tau \sup_{[-\epsilon, \epsilon]} w_x(t)\right)^{-2}\\
&\leq C\tau^{c_0} \epsilon + C\epsilon (\tau T\delta)^{-2}, \quad \forall 0<\tau<1.
\end{align*}
Recall in \cite{MR1009171}, Bourgain constructs the rectangle $R = R_{x,\epsilon}$ centered at $x$ with scale $\epsilon$, as a rectangle with length $2\epsilon|v(x)|$ along the direction $v(x)$ and with width $\delta = \delta(R_{x,\epsilon}):= \epsilon \sup_{[-\epsilon, \epsilon]} w_x(t)/|v(x)|$. The following estimates \eqref{2''}--\eqref{12} are implemented for $f$ whose Fourier transform is supported on $B(0,2T)\backslash B(0,T)$, and the parameter $T$ in \eqref{2''}--\eqref{12} satisfies $T\delta >1$.

If we assume instead condition \eqref{li}, we have 
\begin{equation}\label{2''}
\begin{aligned}
&\quad \int_{-\epsilon}^\epsilon \left( 1+\epsilon T |v(x_0)|^{-1}w_x(t)\right)^{-2}\,dt\\
&\leq \left|\left\{ t\in [-\epsilon, \epsilon]: w_x(t)<\tau \sup_{[-\epsilon, \epsilon]} w_x(t)\right\}\right| + \epsilon \left( \epsilon T |v(x_0)|^{-1} \tau \sup_{[-\epsilon, \epsilon]} w_x(t)\right)^{-2}\\
&\leq C e^{-\sigma \left(\log \frac{1}{\tau}\right)^{c_1}}  \epsilon + C\epsilon (\tau T\delta)^{-2}, \quad \forall 0<\tau<1. 
\end{aligned}
\end{equation}
If we assume instead condition \eqref{zhang}, we have 
\begin{equation}\label{3''}
\begin{aligned}
&\quad \int_{-\epsilon}^\epsilon \left( 1+\epsilon T |v(x_0)|^{-1}w_x(t)\right)^{-2}\,dt\\
&\leq \left|\left\{ t\in [-\epsilon, \epsilon]: w_x(t)<\tau \sup_{[-\epsilon, \epsilon]} w_x(t)\right\}\right| + \epsilon \left( \epsilon T |v(x_0)|^{-1} \tau \sup_{[-\epsilon, \epsilon]} w_x(t)\right)^{-2}\\
&\leq C\left(\log \frac{1}{\tau}\right)^{-p} \epsilon + C\epsilon (\tau T\delta)^{-2}, \quad \forall 0<\tau<1. 
\end{aligned}
\end{equation}

We therefore need corresponding modifications for (3.29) of \cite{MR1009171}:
\begin{align*}
\left\|A_\epsilon f\big|_R\right\|_2 &\lesssim \left( \frac{1}{T\delta}+\left(\frac{1}{\epsilon}\int_{-\epsilon}^\epsilon \left( 1+\epsilon T |v(x_0)|^{-1}w_x(t)\right)^{-2}\,dt\right)^{\frac{1}{2}}\right) \left\|f\left(\chi_{R'}\ast \psi_{\frac{1}{T}} \right)\right\|_2 \\
&\lesssim (T\delta)^{-c} \left\|f\left(\chi_{R'}\ast \psi_{\frac{1}{T}} \right)\right\|_2, \quad \text{for some } c>0,
\end{align*}
where $\psi$ is a fixed Schwarz function whose Fourier transform is supported in the unit disk, $\psi_{\frac{1}{T}} (x)= T^2 \psi(Tx)$, and $R'$ denotes the doubled rectangle of $R$. Recall condition \eqref{li} is of interest only when $c_1 <1$. By taking $0<\tau<1$ in \eqref{2''} to be $\tau_1$ such that
$$
e^{-\sigma \left(\log \frac{1}{\tau_1}\right)^{c_1}} = (\tau_1 T\delta)^{-2},
$$
we have
$$
\left(2\log \, (T\delta)\right)^{c_1} = \left(2\log \, \frac{1}{\tau_1} + \sigma \left(\log \frac{1}{\tau_1}\right)^{c_1}\right)^{c_1} \leq \left(2\log \, \frac{1}{\tau_1}\right)^{c_1} + \left(\sigma \left(\log \frac{1}{\tau_1}\right)^{c_1}\right)^{c_1} \lesssim \left(\log \, \frac{1}{\tau_1}\right)^{c_1},
$$
and thus 
\begin{equation}\label{11}
\begin{aligned}
\big\|A_\epsilon f\big|_R\big\|_2 &\lesssim \left( \frac{1}{T\delta} + e^{-\frac{\sigma}{2} \left(\log \frac{1}{\tau_1}\right)^{c_1}}\right) \left\|f\left(\chi_{R'}\ast \psi_{\frac{1}{T}} \right)\right\|_2 \\
&\lesssim e^{-\sigma' \left(\log T\delta \right)^{c_1}}\left\|f\left(\chi_{R'}\ast \psi_{\frac{1}{T}} \right)\right\|_2,
\end{aligned}
\end{equation}
for some $\sigma'>0$.

By taking $0<\tau<1$ in \eqref{3''} to be $\tau_2$ such that 
$$
\left(\log \frac{1}{\tau_2} \right)^{-p} = (\tau_2T\delta)^{-2},
$$
we have for any $\eta>0$,
$$
\tau_2^{2+\eta} =\frac{\tau_2^2}{\big(\frac{1}{\tau_2}\big)^\eta} \lesssim \frac{\tau_2^2}{\big(\log \frac{1}{\tau_2}\big)^p} = (T\delta)^{-2},
$$
and thus, by fixing a sufficiently small $\eta>0$,
\begin{equation}\label{12}
\begin{aligned}
\big\|A_\epsilon f\big|_R\big\|_2 &\lesssim \left( \frac{1}{T\delta} + \left(\log \frac{1}{\tau_2}\right)^{-\frac{p}{2}}\right) \left\|f\left(\chi_{R'}\ast \psi_{\frac{1}{T}} \right)\right\|_2 \\
&\lesssim \left( \log \left((CT\delta)^{\frac{2}{2+\eta}}\right)\right)^{-\frac{p}{2}} \left\|f\left(\chi_{R'}\ast \psi_{\frac{1}{T}} \right)\right\|_2 \\
&\lesssim \left(\log CT\delta \right)^{-\frac{p}{2}} \left\|f\left(\chi_{R'}\ast \psi_{\frac{1}{T}} \right)\right\|_2,
\end{aligned}
\end{equation}
for some $C>0$.

Section 4 in \cite{MR1009171} remains the same. 

Now let $f$ be an arbitrary $L^2$ function (no longer assume it is supported in an annulus of radius $\sim T$). Let $f= \sum_{T \,\text{dyadic}} f_T$ be a Littlewood-Paley decomposition with $\text{supp}\, \hat f_T \subset B(0,2T)\backslash B(0,T)$. In \cite{MR1009171},
$$
\Omega_{\epsilon,s}:= \left\{x\in \Omega : 2^{-s-1} \leq \epsilon |v(x)|^{-1} \sup_{[-\epsilon, \epsilon]} w_x(t) = \delta(R_{x,\epsilon}) <2^{-s}\right\},
$$
and
$$
\Omega_{\epsilon, s}':=\bigcup_{x\in \Omega_{\epsilon,s}} R_{x,\epsilon}',
$$
where $R_{x,\epsilon}'$ denotes the doubled rectangle of $R_{x,\epsilon}$.

The last place that requires modification is the last several equation displays in Page 20 in \cite{MR1009171}: cover $\Omega_{\epsilon,s}$ with rectangles $R_l$ of length $\sim \epsilon$ and width $\delta \sim 2^{-s}$ ($s\in \mathbb{N}$), then
$$
\left\|A_\epsilon\left(f_{2^{s+j}}\right)\chi_{\Omega_{\epsilon,s}} \right\|_2^2 \leq \sum_l \left\|A_\epsilon\left(f_{2^{s+j}}\right)\chi_{R_l}\right\|_2^2 \lesssim 2^{-2cj} \sum_l \left\|f_{2^{s+j}} \left(\chi_{R_l'}\ast \psi_{\frac{1}{2^{s+j}}}\right)\right\|_2^2,
$$
where $R_l'$ denotes a doubled rectangle of $R_l$, thus for equation (5.4) of \cite{MR1009171}:
$$
\|(5.4)\|_2 := \Bigg\|\sum_{j>0} \bigg[ \max_\epsilon \Big( \sum_{s>0} \left|A_\epsilon(f_{2^{s+j}})\right|^2 \chi_{\Omega_{\epsilon,s}} \Big)^{\frac{1}{2}} \bigg]\Bigg\|_2 \leq \sum_j \Bigg( \sum_{\epsilon, s} \bigg\| A_\epsilon(f_{2^{s+j}}) \chi_{\Omega_{\epsilon,s}} \bigg\|_2^2 \Bigg)^{\frac{1}{2}},
$$
they have
$$
\|(5.4)\|_2^2 \lesssim \bigg(\sum_j 2^{-cj}\bigg)\bigg(\sum_j 2^{-cj} \sum_{\epsilon, s} \left\|f_{2^{s+j}} \left(\chi_{\Omega_{\epsilon,s}'} \ast \psi_{2^{-s-j}}\right)\right\|_2^2 \bigg) \lesssim \sum_j 2^{-cj} \sum_s \|f_{2^{s+j}}\|_2^2 \lesssim \|f\|_2^2,
$$
where $0<\epsilon<\epsilon_0$ takes dyadic values.

With \eqref{11}, we have instead
$$
\left\|A_\epsilon\left(f_{2^{s+j}}\right)\chi_{\Omega_{\epsilon,s}} \right\|_2^2 \leq \sum_l \left\|A_\epsilon\left(f_{2^{s+j}}\right)\chi_{R_l}\right\|_2^2 \lesssim e^{-2\sigma'j^{c_1}} \sum_l \left\|f_{2^{s+j}} \left(\chi_{R_l'}\ast \psi_{\frac{1}{2^{s+j}}}\right)\right\|_2^2, \text{ for some } \sigma''>0,
$$
and thus for some $\sigma''>0$,
$$
\|(5.4)\|_2^2 \lesssim \bigg(\sum_j e^{-\sigma''j^{c_1}} \bigg) \bigg( \sum_j e^{-\sigma''j^{c_1}} \sum_{\epsilon, s} \left\|f_{2^{s+j}} \left(\chi_{\Omega_{\epsilon,s}'} \ast \psi_{2^{-s-j}}\right)\right\|_2^2 \bigg) \lesssim \sum_j e^{-\sigma''j^{c_1}} \sum_s \|f_{2^{s+j}}\|_2^2 \lesssim \|f\|_2^2.
$$

With \eqref{12}, we have instead: there exists a constant $j_0$ such that for $j\geq j_0$,
$$
\left\|A_\epsilon\left(f_{2^{s+j}}\right)\chi_{\Omega_{\epsilon,s}} \right\|_2^2 \leq \sum_l \left\|A_\epsilon\left(f_{2^{s+j}}\right)\chi_{R_l}\right\|_2^2 \lesssim \frac{1}{j^p} \sum_l \left\|f_{2^{s+j}} \left(\chi_{R_l'}\ast \psi_{\frac{1}{2^{s+j}}}\right)\right\|_2^2,
$$
and thus
$$
\|(5.4)\|_2^2 \lesssim \bigg(\sum_j \frac{1}{j^{\frac{p}{2}}}\bigg) \bigg(\sum_j \frac{1}{j^{\frac{p}{2}}} \sum_{\epsilon, s} \left\|f_{2^{s+j}} \left(\chi_{\Omega_{\epsilon,s}'} \ast \psi_{2^{-s-j}}\right)\right\|_2^2 \bigg) \lesssim \sum_j \frac{1}{j^{\frac{p}{2}}} \sum_s \|f_{2^{s+j}}\|_2^2 \lesssim \|f\|_2^2.
\qedhere
$$
\end{proof}

\section{A non-centered rectangular maximal operator}\label{sec3}

In this section, motivated by Lacey and Li's work, we introduce a non-centered rectangular maximal operator tailored to the underlying geometry and show its weak $(1,1)$ boundedness. This offers a novel tool for the subsequent study of the Zygmund and Stein conjectures.

Instead of the centered maximal function associated with rectangles $R_{x,\epsilon}$ as in \cite{MR1009171}, Lacey and Li \cite{MR2654385} introduced a non-centered maximal function. The non-centered maximal function is arguably more suitable, as operators associated with vector fields---particularly in the non-analytic setting---do not necessarily satisfy conditions such as \eqref{bourgain}, \eqref{li}, and \eqref{zhang}, meaning they lack inherent rigidity.

Let $v$ be a $C^1$ vector field on $\mathbb{R}^2$. Assume $|\nabla v|<B$. For any arbitrary rectangle $R= I\times J$ in the plane, assuming $|I|\geq |J|$, denote the length $L(R)=|I|$ and width $W(R) =|J|$. Lacey and Li consider the set of rectangles $R$ with $L(R)< \frac{1}{100B}$ and $W(R)<\frac{B}{100}$. Let
$$
V(R) = \left\{x\in R:  \left(\text{the angle between } v(x) \text{ and the long axis of } R\right) < W(R)/(2L(R))\right\}.
$$
Note $V(R)$ is an open subset of $R$. For any $0<w<\frac{B}{100}$ and $0<\delta<1$ fixed, Lacey and Li's maximal functions are defined as
$$
M_{v,\delta, w} f(x) = \sup_{\substack{|V(R)|\geq \delta |R|\\ w\leq W(R)<2w}} \frac{\chi_R(x)}{|R|} \int_R |f(y)|\,dy. 
$$
They proved that if $v$ is $C^{1+\eta}$ for some $\eta>0$, and if $M_{v,\delta, w}$ is bounded $L^p \to L^{p, \infty}$ for some $1<p<2$ with norm $\lesssim \delta^{-N}$ for all $w, \delta$, then the local Hilbert transform along $v$ is bounded on $L^2$.

However, such boundedness for $M_{v,\delta, w}$ has not been established. From our perspective, the geometry of a \emph{simply connected} rectangular domain may still \emph{not} be well adapted to the irregular behavior of operators associated with vector fields. In fact, we prove boundedness for the following maximal functions associated with $V(R)$:

\begin{theorem}\label{prop}
For any $0<\theta<\frac{1}{100}$ and $0<\delta<1$,
$$
\widetilde M_{v,\delta, \theta} f(x) = \sup_{\substack{|V(R)|\geq \delta |R|\\ \theta\leq W(R)/L(R)<2\theta}} \frac{\chi_{V(R)}(x)}{|V(R)|} \int_{V(R)} |f(y)|\,dy
$$    
is of weak type $(1,1)$, and hence bounded on $L^p$ for $1<p\leq\infty$; the operator norms are $\lesssim \delta^{-1}$.
\end{theorem}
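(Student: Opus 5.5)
The plan is to prove the weak type $(1,1)$ bound by a Vitali-type covering argument applied directly to the sets $V(R)$, not to the rectangles $R$. Covering by arbitrarily oriented rectangles of fixed eccentricity fails in general, because of Kakeya/Besicovitch-type configurations. The definition of $V(R)$ is what rules such configurations out: at every point of $V(R)$ the vector field is almost parallel to the long axis of $R$. Hence two sets $V(R_1),V(R_2)$ can only meet if the corresponding rectangles have nearly the same orientation. Once the weak type $(1,1)$ bound is established, the $L^p$ bounds for $1<p\le\infty$ follow from Marcinkiewicz interpolation with the trivial estimate $\|\widetilde M_{v,\delta,\theta}f\|_\infty\le\|f\|_\infty$.

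\textbf{Step 1 (engulfing lemma).} Let $R_1,R_2$ be admissible rectangles, meaning $\theta\le W(R_i)/L(R_i)<2\theta$, with $L(R_2)\le 2L(R_1)$ and $V(R_1)\cap V(R_2)\neq\emptyset$. I would show that $R_2\subset CR_1$, the concentric dilate by an absolute constant $C$.

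Take $x\in V(R_1)\cap V(R_2)$. Since $W(R_i)/(2L(R_i))<\theta$, the direction $v(x)$ makes an angle less than $\theta$ with each long axis. So the two long axes differ in angle by less than $2\theta\le 2W(R_1)/L(R_1)$.

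The common point $x$ lies in $R_1\cap R_2$. The second rectangle has length $L(R_2)\le 2L(R_1)$ and width $W(R_2)<2\theta L(R_2)\le 4\theta L(R_1)\le 4W(R_1)$. Rotating $R_2$ by an angle $\lesssim W(R_1)/L(R_1)$ about a point in $R_1$ moves points within distance $\lesssim L(R_1)$ by at most $\lesssim W(R_1)$ in the transverse direction. Therefore $R_2$ is contained in a fixed dilate of $R_1$.

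This elementary geometric step is the crux of the proof. I expect it to be the only real obstacle, mainly in tracking constants carefully. Note that it uses no regularity of $v$, only the pointwise angle condition, and that $\theta<1/100$ keeps all angles small.

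\textbf{Step 2 (selection).} Fix $\lambda>0$ and let $E=\{\widetilde M_{v,\delta,\theta}f>\lambda\}$. Since each $V(R)$ is open, $E$ is a union of open sets and hence open. Take a compact $K\subset E$. Cover $K$ by finitely many sets $V(R)$ that are admissible, satisfy $|V(R)|\ge\delta|R|$, and have $\frac{1}{|V(R)|}\int_{V(R)}|f|>\lambda$.

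Greedily select $R_1,R_2,\dots$ as follows. At each step choose the remaining rectangle of largest length. Then discard every remaining rectangle whose $V$-set meets $V$ of a selected one.

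The selected sets $V(R_i)$ are pairwise disjoint. Every discarded $R'$ has $V(R')$ meeting some $V(R_i)$ with $L(R')\le L(R_i)$. By Step 1, $V(R')\subset R'\subset CR_i$.

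\textbf{Step 3 (conclusion).} Combining the selection with the density condition and the lower bound on the averages,
\[
|K|\le \sum_i |CR_i|\lesssim \sum_i |R_i|\le \delta^{-1}\sum_i|V(R_i)|< \frac{1}{\delta\lambda}\sum_i\int_{V(R_i)}|f|\le \frac{\|f\|_1}{\delta\lambda},
\]
where the last inequality uses the disjointness of the $V(R_i)$. Taking the supremum over compact $K\subset E$ gives $|E|\lesssim \delta^{-1}\lambda^{-1}\|f\|_1$, which is the weak type $(1,1)$ bound with norm $\lesssim\delta^{-1}$.

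Interpolating this with the $L^\infty$ bound $1$ yields $L^p$ norms $\lesssim \delta^{-1/p}\le\delta^{-1}$, as claimed in Theorem~\ref{prop}.
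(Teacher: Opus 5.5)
Your proposal is correct and follows essentially the same route as the paper. Both select a maximal disjoint family of sets $V(R_i)$ in order of decreasing length, then place each discarded rectangle inside a fixed dilate of a selected one (the paper uses the $10$-fold dilate) by comparing both long axes with $v(z_0)$ at a common point $z_0$, and finally run the chain $|K|\le\sum_i|R_i'|\lesssim\delta^{-1}\sum_i|V(R_i)|\le(\delta\lambda)^{-1}\|f\|_1$; your explicit Marcinkiewicz step giving $L^p$ norms $\lesssim_p\delta^{-1/p}$ spells out what the paper leaves implicit.
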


Our goal is not to establish boundedness of the Hilbert transform associated with vector fields, but rather to isolate a structural shift at the level of non-centered rectangular maximal functions. Further extensions of this geometric framework---both for such maximal functions and for other components of the argument in \cite{MR2654385}---would likely be required to resolve Stein's conjecture.

\begin{proof}[Proof of Theorem \ref{prop}]
Fix arbitrary $f\in L^1(\mathbb{R}^2)$. For any $\lambda>0$, consider the open set $\left\{\widetilde M_{v, \delta, \theta}f>\lambda\right\}$. Fix an arbitrary compact subset $K$. $K$ is covered by finitely many $V(R_1), \ldots, V(R_N)$ such that $|V(R_i)|\geq \delta |R_i|$, $\theta \leq W(R_i)/L(R_i) <2\theta$, and
$$
\frac{1}{|V(R_i)|}\int_{V(R_i)} |f| >\lambda.
$$
Find a maximal disjoint subcollection of them, ordered by the lengths of the rectangles from largest to smallest, denoted by $V(R_1), \ldots, V(R_k)$. Let $R_1', \ldots, R_k'$ be the $10$-fold dilations of the rectangles $R_1, \ldots, R_k$, respectively, preserving their respective centers and eccentricities. Then 
$$
K\subseteq \bigcup_{i=1}^k R_i'.
$$
Indeed, if this were not the case, for a point $x_0\in K$ not covered by $R_1', \ldots, R_k'$, we have $x_0\in V(R_0)$ for some $R_0$. By how we choose the maximal disjoint collection, $V(R_0)$ must intersect $V(R_i)$ for some $1\leq i \leq k$ with $L(R_0)\leq L(R_i)$. Choose any point $z_0\in V(R_0) \cap V(R_i)$. The angle $\phi_1$ between $v(z_0)$ and the long axis of $R_0$ is less than $W(R_0)/(2L(R_0))$, and the angle $\phi_2$ between $v(z_0)$ and the long axis of $R_i$ is less than $W(R_i)/(2L(R_i))$.  Hence the angle $\phi_0$ between long axes of $R_0$ and $R_i$ satisfies
$$
\phi_0 \leq \phi_1 + \phi_2 < \frac{W(R_0)}{2L(R_0)} + \frac{W(R_i)}{2L(R_i)}<2\theta \leq 2\frac{W(R_i)}{L(R_i)}.
$$
Therefore for arbitrary $z\in R_0, y\in R_i$, the absolute value of the projection of $y-z$ along the short axis of $R_i$ is 
$$
\leq W(R_0)\cos \phi_0 + L(R_0) \sin \phi_0 \leq W(R_0)+ L(R_i) \phi_0 \leq 2W(R_i) + 2W(R_i) = 4W(R_i),
$$
and the absolute value of the projection of $y-z$ along the long axis of $R_i$ is
$$
\leq W(R_0)\sin \phi _0 + L(R_0) \cos \phi_0 \leq L(R_0) + L(R_0) =2L(R_0).
$$
Thus $R_0\subseteq R_i'$, and in particular, $x_0\in R_i'$; contradiction. Hence $K\subseteq \bigcup R_i'$. 

Therefore we have
\begin{align*}
|K| \leq \sum_{i=1}^k |R_i'| \leq 100 \sum_{i=1}^k |R_i| \leq \frac{100}{\delta} \sum_{i=1}^k |V(R_i)| \leq \frac{100}{\delta \lambda} \sum_{i=1}^k \int_{V(R_i)} |f| \leq \frac{100}{\delta \lambda} \|f\|_1.
\end{align*}
Hence $\widetilde M_{v, \delta, \theta}$ is of weak type $(1,1)$ with norm $\lesssim \delta^{-1}$.
\end{proof}

\bibliographystyle{amsalpha}
\bibliography{ref}

\vspace{2em}
\noindent
Lingxiao Zhang\\
Center for Applied Mathematics\\
Tianjin University\\
No. 92 Weijin Road,
Tianjin 300072,
P. R. China\\
Email: \href{mailto:lingxiaoz@tju.edu.cn}{lingxiaoz@tju.edu.cn}\\
MSC: Primary 42B25, 42B20

\end{document}